\theoremstyle{plain}
\newtheorem*{thm A}{Theorem~A}
\newtheorem*{thm B}{Theorem~B}
\newtheorem*{thm C}{Theorem~B}
\newtheorem*{pro A}{Proposition~A}
\newtheorem*{pro B}{Proposition~B}
\newtheorem*{lem A}{Lemma~A}
\newtheorem*{lem B}{Lemma~B}
\newtheorem*{lem C}{Lemma~C}
\newtheorem*{lem D}{Lemma~D}
\newtheorem*{rem}{Remark}
\newtheorem*{MT1}{Theorem 1}
\newtheorem*{MT2}{Theorem 2}
\newtheorem*{ackn}{Acknowledgments}
\newtheorem{theorem}{Theorem}[section]
\newtheorem{lemma}[theorem]{Lemma}
\newtheorem{remark}[theorem]{Remark}
\def \N{\nabla}
\def \x{\xi}
\def \x{\xi}
\def \NJ{{\bar R}_{N}}
\def \Q{Q^m}
\def \C{\mathcal C}
\begin{document}

\title[Derivatives of normal Jacobi operator]{Derivatives of normal Jacobi operator on real hypersurfaces in the complex quadric}

\vspace{0.2in}
\author[H. Lee J.D. P\'{e}rez and Y.J. Suh]{Hyunjin Lee, Juan de Dios P\'{erez}, and Young Jin Suh}

\address{\newline
Hyunjin Lee
\newline The Research Institute of Real and Complex Manifolds (RIRCM), Kyungpook National University, Daegu 41566, Republic of Korea}
\email{lhjibis@hanmail.net}

\address{\newline
Juan de Dios P\'{e}rez
\newline Departamento de Geometria y Topologia \& IEMATH, Universidad de Granada, 18071 Granada, Spain}
\email{jdperez@ugr.es}

\address{\newline
Young Jin Suh
\newline Department of Mathematics \& RIRCM, Kyungpook National University, Daegu 41566, Republic of Korea}
\email{yjsuh@knu.ac.kr}

\footnotetext[1]{{\it 2010 Mathematics Subject Classification}:
Primary 53C40; Secondary 53C55.}
\footnotetext[2]{{\it Key words}: Reeb parallel normal Jacobi operator, $\mathcal C$-parallel normal Jacobi operator, singular normal vector field, $\mathfrak A$-isotropic, $\mathfrak A$-principal, complex quadric.}

\thanks{* This work was supported by grant Proj. No. NRF-2018-R1D1A1B-05040381 and the first author by NRF-2019-R1I1A1A01050300 from National Research Foundation of Korea. Second author by MINECO-FEDER Project MTM 2016-78807-C2-1-P}

\begin{abstract}
In \cite{S 2017}, Suh gave a non-existence theorem for Hopf real hypersurfaces in the complex quadric with parallel normal Jacobi operator. Motivated by this result, in this paper, we introduce some generalized conditions  named $\mathcal C$-parallel or Reeb parallel normal Jacobi operators. By using such weaker parallelisms of normal Jacobi operator, first we can assert a non-existence theorem of Hopf real hypersurfaces with $\mathcal C$-parallel normal Jacobi operator in the complex quadric $Q^{m}$, $m \geq 3$. Next, we prove that a Hopf real hypersurface has Reeb parallel normal Jacobi operator if and only if it has an $\mathfrak A$-isotropic singular normal vector field.
\end{abstract}

\maketitle

\section{Introduction}\label{section 1}
\setcounter{equation}{0}
\renewcommand{\theequation}{1.\arabic{equation}}
\vspace{0.13in}

As an example of Hermitian symmetric space of compact type, we can give the complex quadric ${Q^m}= SO_{m+2}/SO_mSO_2$, which is a complex hypersurface in the complex projective space~${\mathbb C}P^{m+1}$ (see \cite{R1}, \cite{R2}, \cite{BS}, \cite{S3}, \cite{S4}). The complex quadric can also be regarded as a kind of real Grassmann manifold of compact type with rank 2 (see \cite{He}).  Accordingly, the complex quadric ${Q^m}$ admits two important geometric structures, a complex conjugation structure $A$ and a K\"ahler structure $J$, which anti-commute with each other, that is, $AJ=-JA$. Then for $m \geq 3$ the triple $({Q^m},J,g)$ is a Hermitian symmetric space of compact type with rank $2$ and its maximal sectional curvature is equal to $4$ (see Kobayashi and Nomizu \cite{Ko} and Reckziegel \cite{R}).

\vskip 6pt

In addition to the complex structure~$J$ there is another distinguished geometric structure on~$\Q$, namely a parallel rank two vector bundle ${\mathfrak A}$ which contains an $S^1$-bundle of real structures, that is, complex conjugations $A$ on the tangent spaces of $\Q$. The set is denoted by ${\mathfrak A}_{[z]}=\{A_{{\lambda}\bar z}{\vert}\, {\lambda}\in S^1{\subset}{\mathbb C}\}$, $[z] \in {\Q}$, and it is the set of all complex conjugations defined on $\Q$. Then ${\mathfrak A}_{[z]}$ becomes a parallel rank $2$-subbundle of $\text{End}\ T_{[z]}{\Q}$, $[z] \in {\Q}$. This geometric structure determines a maximal ${\mathfrak A}$-invariant subbundle ${\mathcal Q}$ of the tangent bundle $TM$ of a real hypersurface~$M$ in $\Q$.  Here the notion of parallel vector bundle ${\mathfrak A}$ means that $({\bar\nabla}_XA)Y=q(X)JAY$ for any vector fields $X$ and $Y$ on $\Q$,
where  $\bar\nabla$ and ~$q$ denote a connection and a certain $1$-form defined on $T_{[z]}{\Q}$, $[z] \in {\Q}$ respectively (see \cite{BS}).

\vskip 6pt

Recall that a nonzero tangent vector $W \in T_{[z]}{\Q}$ is called singular if it is tangent to more than one maximal flat in $\Q$. There are two types of singular tangent vectors for the complex hyperbolic quadric~$\Q$:
\begin{itemize}
\item If there exists a conjugation $A \in {\mathfrak A}_{[z]}$ such that $W \in V(A)=\{X \in T_{[z]}{Q^m}{\vert}\,AX=X\}$, then $W$ is singular. Such a singular tangent vector is called {\it ${\mathfrak A}$-principal}.
\item If there exist a conjugation $A \in {\mathfrak A}_{[z]}$ and orthonormal vectors $Z_{1}$, $Z_{2} \in V(A)$ such that $W/||W|| = (Z_{1}+JZ_{2})/\sqrt{2}$, then $W$ is singular. Such a singular tangent vector is called \emph{${\mathfrak A}$-isotropic},
where $V(A)=\{X \in T_{[z]}{Q^m}{\vert}\, AX=X\}$ and $JV(A)=\{X \in T_{[z]}{Q^m}{\vert} \, AX=-X\}$ are the $(+1)$-eigenspace and $(-1)$-eigenspace for the involution $A$
on $T_{[z]}{Q^m}$, $[z] \in {Q^m}$.
\end{itemize}

\vskip 6pt

On the other hand, Jacobi fields along geodesics of a given Riemannian manifold $(\bar M,g)$ satisfy a well known differential equation. This equation naturally inspires the so-called Jacobi operator. That is, if $\bar R$ denotes the curvature operator of $\bar M$, and $Z$ is a tangent vector field to $\bar M$, then the Jacobi operator ${\bar R}_Z \in \text{End}(T_{[z]} {\bar M})$ with respect to $Z$ at $[z] \in \bar M$, defined by $({\bar R}_ZY)([z])=({\bar R}(Y,Z)Z)([z])$ for any $Y \in T_{[z]} {\bar M}$, becomes a self adjoint endomorphism of the tangent bundle $T {\bar M}$ of $\bar M$. Thus, each tangent vector field $Z$ to $\bar M$ provides a Jacobi operator ${\bar R}_Z$ with respect to $Z$. In particular, let $M$ be a real hypersurface in $\bar M$ and $N$ be a normal vector field of $M$ in $\bar M$. Then, for the normal vector field~$N$, we can define the Jacobi operator ${\bar R}_{N} \in  \mathrm{End}{T{\bar M}}$, which is said to be the {\it normal Jacobi operator}. Until now, several geometric properties -~parallelism, invariancy, commuting property - of the normal Jacobi operator ${\bar R}_{N}$ for real hypersurfaces in K\"{a}hler manifolds have been studied by many geometers ~(\cite{JKS 2010}, \cite{LS 2017}, \cite{LS 2018}, \cite{MPJS 2014}, \cite{PJS 2007}).

\vskip 6pt

Actually, the normal Jacobi operator~$\bar R_{N}$ of $M$ is said to be {\it parallel} (\ $\mathcal C$-{\it parallel} or {\it Reeb parallel}, respectively) if $\bar R_{N}\in \mathrm{End}TM$ satisfies
$$
\nabla_{X}{\bar R}_{N}=0 \quad (\ \nabla_{\mathcal C}{\bar R}_{N}=0\ \text{or}\  \ \ \nabla_{\xi}{\bar R}_{N}=0, \ \ \mathrm{respectively)}
$$
for any tangent vector field~$X$ on $M$, where $\mathcal C $ denotes the orthogonal distribution of $\mathrm{span}\{\xi\}$ such that $\mathcal C =\{X \in T_{[z]}M\,|\, X \bot \xi, [z] \in M \}$.

\vskip 6pt
In \cite{JKS 2010}, Jeong, Kim and Suh have investigated the parallel normal Jacobi operator of a Hopf real hypersurface in complex two-plane Grassmannians~$G_{2}(\mathbb C^{m+2})=SU_{m+2}/S(U_{m}U_{2})$ and they gave a non-existence theorem.
Here a real hypersurface $M$ is said to be {\it Hopf} if the Reeb vector field~$\xi$ of~$M$ is principal for the shape operator~$S$, that is, $S\xi=g(S\xi, \xi) \xi = \alpha \xi$. In particular, if the Reeb curvature function $\alpha =g(S\xi, \xi)$ identically vanishes, we say that $M$ has {\it a vanishing geodesic Reeb flow}. Otherwise, $M$ has a {\it non-vanishing geodesic Reeb flow}. Moreover, for the complex quadric~$Q^{m}$ Suh~\cite{S 2017} proved the following:
\begin{thm A}
There do not exist any real hypersurfaces in the complex quadric $Q^{m}$, $m \geq 3$, with parallel normal Jacobi operator.
\end{thm A}

\noindent In addition, recently, Lee and Suh~\cite{LS 2017} have generalized such a notion to the recurrent normal Jacobi operator, that is, $({\nabla}_X{\bar R}_{N})Y={\beta}(X){{\bar R}}_{N}Y$ for a certain $1$-form $\beta$ and any vector fields $X,Y$ on $M$ in $Q^{m}$. By using this notion, they gave a non-existence theorem.

\vskip 6pt

Motivated by these results, as a generalization of Theorem A, in this paper we want to give some classifications of Hopf real hypersurfaces in $Q^{m}$ with respect to $\mathcal C$-parallelism and Reeb parallelism for the normal Jacobi operator~$\bar R_{N}$ which are respectively defined by
\begin{equation*}
{\nabla}_{\mathcal C}{\bar R}_{N}=0
\end{equation*}
and
$$
{\nabla}_{\xi}{\bar R}_{N}=0,
$$
where the orthogonal distribution $\mathcal C $ of the Reeb vector field $\xi$ is defined by
$$
\mathcal C =\{X \in T_{[z]}M\,|\, X \bot \xi, [z] \in M \}.
$$

\vskip 6pt

Moreover, in \cite{LS 2018} from the condition of Hopf and $\mathfrak A$-principal unit normal vector field for real hypersurfaces in $\Q$ the authors have derived the notion of contact $S{\phi}+{\phi}S=k{\phi}$ which is introduced in \cite{BS 2015}. Actually, the concept of {\it contact} real hypersurfaces was regarded as a kind of typical characterizations of model spaces of type~$B$ in non-flat complex space forms (see \cite{Kon 1979} and~\cite{Vernon 1987}). Recently, in \cite{BS 2015} and \cite{KS} Berndt-Suh and Klein-Suh respectively gave those kind of characterizations of Type~$B$ in the complex quadric~$Q^{m}=SO_{m+2}/SO_{m}SO_{2}$ and the complex hyperbolic quadric~$Q^{m*}=SO_{2,m}^{0} / SO_{2}SO_{m}$. In particular, by virtue of the result due to Berndt and Suh in \cite{BS 2015}, we introduce the following theorem which was proved by Lee and Suh~\cite{LS 2018}.
\begin{thm B}
Let $M$ be a Hopf real hypersurface in the complex quadric~$Q^{m}$, $m \geq 3$. Then $M$ has an $\mathfrak A$-principal normal vector field in $Q^{m}$ if and only if $M$ is locally congruent to the model space of type $(\mathcal T_{B})$. Here, the model space of type $(\mathcal T_{B})$ means the tube of radius $0 < r < \frac{\pi}{2 \sqrt{2}}$ around the $m$-dimensional sphere~$S^{m}$ which is embedded in $Q^{m}$ as a real form of $Q^{m}$.
\end{thm B}

\begin{rem}
\rm As typical classifications for real hypersurfaces $M$ with $\mathfrak A$-isotropic singular normal vector field in $Q^{m}$, Berndt-Suh have introduced the notion of {\it isometric Reeb flow} on $M$ in $\Q$, $m \geq 3$, and have asserted that $M$ is locally congruent to a tube over a totally geodesic ${\mathbb C}P^k$ in~${Q}^{2k}$, $m=2k$. Then it is known that the normal vector field of this kind of tube is $\mathfrak A$-isotropic (see \cite{BS 2013}, \cite{S1}, \cite{S 2017} and \cite{SHw}).
\end{rem}

\noindent Then motivated by these backgrounds, first we prove the following:
\begin{MT1}\label{Main Theorem 1}
There does not exist any Hopf real hypersurface with $\mathcal C$-parallel normal Jacobi operator in the complex quadric~$Q^{m}$, $m \geq 3$.
\end{MT1}

\vskip 6pt

Next we consider a Hopf real hypersurface with Reeb parallel normal Jacobi operator in~$Q^{m}$. Then by virtue of Theorem~$\rm B$ mentioned above we can assert another theorem as follows:
\begin{MT2}\label{Main Theorem 2}
Let $M$ be a Hopf real hypersurface in the complex quadric $\Q$, $m \geq 3$. Then $M$ has an $\mathfrak A$-isotropic singular normal vector field if and only if $M$ has a Reeb parallel normal Jacobi operator.
\end{MT2}

\vskip 17pt

\section{The complex quadric}\label{section 2}
 \setcounter{equation}{0}
\renewcommand{\theequation}{2.\arabic{equation}}
\vspace{0.13in}
More in detail related to this section we want to recommend  \cite{BS 2013}, \cite{BS 2015}, \cite{K}, \cite{Ko}, \cite{R}, \cite{S1}, \cite{S2}, \cite{S4} and \cite{SHw}. The complex quadric $Q^m$ is the complex hypersurface in ${\mathbb C}P^{m+1}$ which is defined by the equation $z_1^2 + \cdots + z_{m+2}^2 = 0$, where $z_1,\cdots,z_{m+2}$ are homogeneous coordinates on ${\mathbb C}P^{m+1}$. We equip $Q^m$ with the Riemannian metric which is induced from the Fubini Study metric on ${\mathbb C}P^{m+1}$ with constant holomorphic sectional curvature~$4$. The K\"{a}hler structure on ${\mathbb C}P^{m+1}$ induces canonically a K\"{a}hler structure $(J,g)$ on the complex quadric. For a nonzero vector $z \in \mathbb C^{m+2}$ we denote by $[z]$ the complex span of $z$, that is, $[z]=\mathbb C z = \{\lambda z\,|\, \lambda \in \mathbb C\}$. Note that by definition~$[z]$ is a point in $\mathbb C P^{m+1}$. For each $[z] \in Q^m \subset \mathbb C P^{m+1}$ we identify $T_{[z]}{\mathbb C}P^{m+1}$ with the orthogonal complement ${\mathbb C}^{m+2} \ominus {\mathbb C}z$ of ${\mathbb C}z$ in ${\mathbb C}^{m+2}$ (see \cite{Ko}). The tangent space $T_{[z]}Q^m$ can then be identified canonically with the orthogonal complement ${\mathbb C}^{m+2} \ominus ({\mathbb C}z \oplus {\mathbb C}\rho)$ of ${\mathbb C}z \oplus {\mathbb C}\rho$ in ${\mathbb C}^{m+2}$, where $\rho \in \nu_{[z]}Q^m$ is a normal vector of $Q^m$ in ${\mathbb C}P^{m+1}$ at the point $z$.

\vskip 6pt

The complex projective space ${\mathbb C}P^{m+1}$ is a Hermitian symmetric space of the special unitary group $SU_{m+2}$, namely ${\mathbb C}P^{m+1} = SU_{m+2}/S(U_{m+1}U_1)$. We denote by $o = [0,\ldots,0,1] \in {\mathbb C}P^{m+1}$ the fixed point of the action of the stabilizer $S(U_{m+1}U_1)$. The special orthogonal group $SO_{m+2} \subset SU_{m+2}$ acts on ${\mathbb C}P^{m+1}$ with cohomogeneity one. The orbit containing $o$ is a totally geodesic real projective space ${\mathbb R}P^{m+1} \subset {\mathbb C}P^{m+1}$. The second singular orbit of this action is the complex quadric $Q^m = SO_{m+2}/SO_mSO_2$. This homogeneous space model leads to the geometric interpretation of the complex quadric $Q^m$ as the Grassmann manifold $G_2^+({\mathbb R}^{m+2})$ of oriented $2$-planes in ${\mathbb R}^{m+2}$. It also gives a model of $Q^m$ as a Hermitian symmetric space of rank $2$. The complex quadric $Q^1$ is isometric to a sphere $S^2$ with constant curvature, and $Q^2$ is isometric to the Riemannian product of two $2$-spheres with constant curvature. For this reason we will assume $m \geq 3$ from now on.

\vskip 6pt

For a unit normal vector $\rho$ of $Q^m$ at a point $[z] \in Q^m$ we denote by $A = A_\rho$ the shape operator of $Q^m$ in ${\mathbb C}P^{m+1}$ with respect to $\rho$. The shape operator is an involution on the tangent space $T_{[z]}Q^m$ and
$$
T_{[z]}Q^m = V(A_\rho) \oplus JV(A_\rho),
$$
where $V(A_\rho)$ is the $(+1)$-eigenspace of $A_\rho$ and $JV(A_\rho)$ is the $(-1)$-eigenspace of $A_\rho$.  Geometrically this means that the shape operator $A_\rho$ defines a real structure on the complex vector space $T_{[z]}Q^m$, or equivalently, is a complex conjugation on $T_{[z]}Q^m$. Since the real codimension of~$Q^m$ in ${\mathbb C}P^{m+1}$ is $2$, this induces an $S^1$-subbundle ${\mathfrak A}$ of the endomorphism bundle~$\mathrm{End}(TQ^m)$ consisting of complex conjugations. There is a geometric interpretation of these conjugations. The complex quadric~$Q^m$ can be viewed as the complexification of the $m$-dimensional sphere~$S^m$. Through each point $[z] \in Q^m$ there exists a one-parameter family of real forms of $Q^m$ which are isometric to the sphere $S^m$. These real forms are congruent to each other under action of the center $SO_2$ of the isotropy subgroup of $SO_{m+2}$ at $[z]$. The isometric reflection of $Q^m$ in such a real form $S^m$ is an isometry, and the differential at $[z]$ of such a reflection is a conjugation on $T_{[z]}Q^m$. In this way the family ${\mathfrak A}$ of conjugations on $T_{[z]}Q^m$ corresponds to the family of real forms $S^m$ of~$Q^m$ containing $[z]$, and the subspaces $V(A) \subset T_{[z]}Q^m$ correspond to the tangent spaces $T_{[z]}S^m$ of the real forms~$S^m$ of $Q^m$.

\vskip 6pt

The Gauss equation for $Q^m \subset {\mathbb C}P^{m+1}$ implies that the Riemannian curvature tensor $\bar R$ of $Q^m$ can be described in terms of the complex structure $J$ and the complex conjugations $A \in {\mathfrak A}$:
\begin{equation}\label{Riemannian curvature tensor of Q}
\begin{split}
{\bar R}(X,Y)Z &=  g(Y,Z)X - g(X,Z)Y + g(JY,Z)JX - g(JX,Z)JY \\
& \quad  - 2g(JX,Y)JZ  + g(AY,Z)AX \\
& \quad - g(AX,Z)AY + g(JAY,Z)JAX - g(JAX,Z)JAY
\end{split}
\end{equation}
for any vector fields $X,Y$ and $Z$ in $T_{[z]}Q^m$, $[z]{\in}Q^m$
\vskip 6pt

It is well known that for every unit tangent vector $W \in T_{[z]}Q^m$ there exist a conjugation $A \in {\mathfrak A}$ and orthonormal vectors $Z_{1}$, $Z_{2} \in V(A)$ such that
\begin{equation*}
W = \cos (t) Z_{1} + \sin (t) JZ_{2}
\end{equation*}
for some $t \in [0,\pi/4]$  (see \cite{R}). The singular tangent vectors correspond to the values $t = 0$ and $t = \pi/4$. If $0 < t < \pi/4$ then the unique maximal flat containing $W$ is ${\mathbb R}Z_{1} \oplus {\mathbb R}JZ_{2}$.

\vskip 17pt

\section{Real hypersurfaces in $Q^{m}$}\label{section 3}
\setcounter{equation}{0}
\renewcommand{\theequation}{3.\arabic{equation}}
\vspace{0.13in}

Let $M$ be a  real hypersurface in $Q^m$ and denote by $(\phi,\xi,\eta,g)$ the induced almost contact metric structure. By using the Gauss and Weingarten formulas the left-hand side of \eqref{Riemannian curvature tensor of Q} becomes
\begin{equation*}
\begin{split}
{\bar R}(X,Y)Z  & = R(X,Y)Z -g(SY, Z)SX + g(SX, Z)SY \\
& \quad + \big\{g((\nabla_{X}S)Y, Z)- g((\nabla_{Y}S)X, Z) \big \} N,
\end{split}
\end{equation*}
where $R$ and $S$ denote the Riemannian curvature tensor and the shape operator of $M$ in~$Q^m$, respectively.

\vskip 3pt

Note that $JX=\phi X + \eta(X)N$ and $JN=-\xi$, where $\phi X$ is the tangential component of $JX$ and $N$ is a (local) unit normal vector field of $M$. The tangent bundle $TM$ of $M$ splits orthogonally into  $TM = {\mathcal C} \oplus {\mathbb R}\xi$, where ${\mathcal C} = \mathrm{ker}\,\eta =\{ X \in TM \, |\  g(X, \xi) =\eta(X) =0 \}$ is the maximal complex subbundle of $TM$. The structure tensor field $\phi$ restricted to ${\mathcal C}$ coincides with the complex structure~$J$ restricted to~${\mathcal C}$, and $\phi \xi = 0$. Moreover, since the complex quadric $Q^{m}$ has also a real structure~$A$, we decompose $AX$ into its tangential and normal components for a fixed $A \in \mathfrak A_{[z]}$ and $X \in T_{[z]}M$:
\begin{equation}\label{AX}
AX=BX + \rho(X)N
\end{equation}
where $BX$ is the tangential component of $AX$ and
\begin{equation*}
\rho(X)=g(AX, N)=g(X, AN)=g(X, AJ\xi) = g(JX, A \xi).
\end{equation*}
From these notations, taking the tangential and normal components of \eqref{Riemannian curvature tensor of Q}, we obtain
\begin{equation}\label{e: 3.2}
\begin{split}
& R(X,Y)Z - g(SY,Z)SX + g(SX,Z)SY \\
& =  g(Y,Z)X - g(X,Z)Y + g(JY,Z) \phi X - g(JX,Z) \phi Y - 2g(JX,Y) \phi Z \\
& \quad \  + g(AY,Z) BX - g(AX,Z)BY + g(JAY,Z) \phi BX  \\
& \quad \  -  g(JAY,Z) \rho(X) \xi - g(JAX,Z) \phi BY +  g(JAX,Z)\rho(Y) \xi  \\
&=  g(Y,Z)X - g(X,Z)Y + g(\phi Y,Z) \phi X - g(\phi X,Z) \phi Y - 2g(\phi X,Y) \phi Z \\
& \quad \  + g(BY,Z) BX - g(BX,Z)BY +  g(\phi BY, Z) \phi BX - g(\phi BX, Z) \phi BY \\
& \quad \  -  \rho (Y) \eta(Z) \phi BX - \rho (X) g(\phi BY, Z) \xi + \rho (X) \eta(Z) \phi BY + \rho (Y) g(\phi BX, Z) \xi
\end{split}
\end{equation}
and
\begin{equation}\label{e: 3.3}
\begin{split}
(\nabla_{X}S)Y - (\nabla_{Y}S)X & =  \eta(X) \phi Y - \eta(Y) \phi X  - 2 g(\phi X,Y)\xi  - g(X, \phi A \xi)BY \\
& \quad \ + g(\phi A \xi, Y)BX  + g(A\xi, X) \phi BY  +  g(A\xi, X) g(\phi A \xi, Y) \xi \\
& \quad \ -  g(A\xi, Y) \phi BX   - g(A\xi, Y) g(\phi A \xi, X) \xi,
\end{split}
\end{equation}
which are called the equations of Gauss and Codazzi, respectively.

\vskip 6pt

As mentioned in section~\ref{section 2}, since the normal vector field~$N$ belongs to $T_{[z]}Q^{m}$, $[z] \in M$, we can choose $A \in {\mathfrak A}_{[z]}$ such that
\begin{equation*}
N=N_{[z]} = \cos (t) Z_1 + \sin (t) JZ_2
\end{equation*}
for some orthonormal vectors $Z_1$, $Z_2 \in V(A)$ and $0 \leq t \leq \frac{\pi}{4}$ (see Proposition~3 in~\cite{R}). Note that $t$ is a function on $M$. If $t=0$, then $N=Z_{1}\in V(A)$, therefore we see that $N$ becomes an $\mathfrak A$-principal singular tangent vector field. On the other hand, if $t=\frac{\pi}{4}$, then $N= \frac{1}{\sqrt{2}}(Z_{1}+JZ_{2})$. That is, $N$ is an $\mathfrak A$-isotropic singular tangent vector field. In addition, since $\xi = -JN$, we have
\begin{equation}\label{AN, Axi}
\begin{cases}
\xi  =  \sin (t) Z_2 - \cos (t) JZ_1, \\
AN  =  \cos (t) Z_1 - \sin (t) JZ_2,  \\
A\xi  =  \sin (t) Z_2 + \cos (t) JZ_1.
\end{cases}
\end{equation}
This implies $g(\xi,AN) = 0$ and $g(A\xi, \xi) = -g(AN, N)=-\cos ( 2t)$ on $M$. At each point $[z] \in M$ we define the maximal ${\mathfrak A}$-invariant subspace of $T_{[z]} M$, $[z] \in M$, as follows:
\begin{equation*}
{\mathcal Q}_{[z]} = \{X \in T_{[z]}M \mid AX \in T_{[z]}M\ {\rm for\ all}\ A \in {\mathfrak A}_{[z]}\}.
\end{equation*}
It is known that if $N_{[z]}$ is ${\mathfrak A}$-principal, then ${\mathcal Q}_{[z]} = {\mathcal C}_{[z]}$. But if $N_{[z]}$ is not $\mathfrak A$-principal, then ${\mathcal C}_{[z]} = {\mathcal Q}_{[z]} \oplus \mathrm{span}\{AN, A\xi \}$ (see \cite{S1} and \cite{S4}).

\vskip 6pt

We now assume that $M$ is a Hopf real hypersurface in the complex quadric~$Q^{m}$. Then the shape operator~$S$ of~$M$ in $Q^m$ satisfies $S\xi = \alpha \xi$ with the Reeb function $\alpha = g(S\xi,\xi)$ on~$M$. In particular, if $\alpha$ identically vanishes (otherwise, respectively), then it is said  that $M$ has a vanishing (non-vanishing, respectively) geodesic Reeb flow. By virtue of the Codazzi equation~\eqref{e: 3.3}, we obtain the following lemma.
\begin{lemma}[\cite{LS 2017}] \label{lemma Hopf}
Let $M$ be a Hopf real hypersurface in $Q^m$, $m \geq 3$. Then we obtain
\begin{equation}\label{eq: 3.2}
 X \alpha = (\xi \alpha) \eta(X)  + 2g(A\xi,\xi)g(X,AN)
\end{equation}
and
\begin{equation}\label{eq: 3.1}
\begin{split}
2S \phi SX & = \alpha \phi SX + \alpha S\phi X + 2 \phi X + 2 g(X,\phi A \xi) A\xi \\
& \ \  -  2 g(X,A\xi) \phi A \xi  - 2g(\xi,A\xi) g(\phi A \xi, X) \xi + 2 \eta(X) g(\xi,A\xi) \phi A \xi
\end{split}
\end{equation}
for any tangent vector field $X$ on $M$.
\end{lemma}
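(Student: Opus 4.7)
The plan is to extract both identities from the Codazzi equation \eqref{e: 3.3} by exploiting the Hopf condition $S\xi=\alpha\xi$ together with standard almost contact identities and the two structural relations $AN=-\phi A\xi-\eta(A\xi)N$ and $B\xi=A\xi$. The first of these follows from $AJ=-JA$ and $J\xi=N$, which yields $JA\xi=-AN$; taking tangential and normal parts gives $\phi A\xi=-BN$ and, in particular, $g(X,AN)=-g(X,\phi A\xi)$ for every $X\in TM$. The second follows from \eqref{AN, Axi}, since $\rho(\xi)=g(AN,\xi)=0$.

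The key preliminary step is to observe that the Hopf condition, combined with $\nabla_X\xi=\phi SX$, differentiates to
\[
(\nabla_X S)\xi=(X\alpha)\xi+\alpha\phi SX-S\phi SX.
\]
In particular, $(\nabla_\xi S)\xi=(\xi\alpha)\xi$, because $\phi S\xi=\alpha\phi\xi=0$. To derive \eqref{eq: 3.2}, I set $Y=\xi$ in \eqref{e: 3.3} and take the inner product with $\xi$. On the left side, the identity above gives $g((\nabla_X S)\xi,\xi)=X\alpha$ (since $g(\phi SX,\xi)=0=g(S\phi SX,\xi)$), and the symmetry of $\nabla_\xi S$ yields $g((\nabla_\xi S)X,\xi)=g(X,(\nabla_\xi S)\xi)=(\xi\alpha)\eta(X)$. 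On the right, $\phi\xi=0$, $g(\phi X,\xi)=0$, $g(\phi A\xi,\xi)=0$, $B\xi=A\xi$, and $g(\phi BX,\xi)=0$ collapse everything to $-2g(A\xi,\xi)g(X,\phi A\xi)$, which equals $2g(A\xi,\xi)g(X,AN)$ by the identity above. This gives \eqref{eq: 3.2}.

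To derive \eqref{eq: 3.1}, I take the inner product of the full Codazzi equation \eqref{e: 3.3} with $\xi$ (keeping $Y$ general). Using the symmetry $g((\nabla_X S)Y,\xi)=g((\nabla_X S)\xi,Y)$, together with the expression for $(\nabla_X S)\xi$, both $g((\nabla_X S)Y,\xi)$ and $g((\nabla_Y S)X,\xi)$ expand explicitly. The operator $T:=S\phi S$ is skew-symmetric (since $g(TX,Y)=g(\phi SX,SY)=-g(SX,\phi SY)=-g(X,TY)$), so
\[
g(S\phi SY,X)-g(S\phi SX,Y)=-2g(S\phi SX,Y).
\]
The remaining $\alpha$-terms combine as $\alpha g((\phi S+S\phi)X,Y)$. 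The right hand side, after pairing with $\xi$, simplifies using the identities above to
\[
-2g(\phi X,Y)+2g(\phi A\xi,Y)g(X,A\xi)-2g(\phi A\xi,X)g(A\xi,Y).
\]
Finally, I eliminate the $X\alpha$ and $Y\alpha$ terms by \eqref{eq: 3.2}, producing only terms of the form $g(A\xi,\xi)g(\phi A\xi,\cdot)\eta(\cdot)$. Reading the resulting scalar identity as the $g(\cdot,Y)$-pairing of a vector identity (which is legitimate because it holds for all $Y$) gives \eqref{eq: 3.1}.

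The conceptual content is very small; the main obstacle is strictly bookkeeping. In particular, one must correctly detect the skew-symmetry of $S\phi S$ (otherwise the factor $2$ in front of $S\phi SX$ is lost) and must handle the $\mathfrak A$-terms consistently, keeping track of which ones contain $B$ versus $A$ and which collapse after pairing with $\xi$ or applying $\phi$. Once these simplifications are executed, the identification with \eqref{eq: 3.1} is immediate.
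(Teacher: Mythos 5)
Your derivation is correct and follows exactly the route the paper indicates (the lemma is quoted from \cite{LS 2017} with the remark that it follows from the Codazzi equation \eqref{e: 3.3}): differentiate $S\xi=\alpha\xi$ using $\nabla_X\xi=\phi SX$, pair the Codazzi equation with $\xi$, and use $B\xi=A\xi$, $g(X,AN)=-g(X,\phi A\xi)$ and the skew-symmetry of $S\phi S$. I verified the bookkeeping in both computations and the resulting identities match \eqref{eq: 3.2} and \eqref{eq: 3.1} term by term.
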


\begin{remark}\label{remark 3.2}
{\rm By virtue of \eqref{eq: 3.2} we know that {\it if $M$ has either vanishing geodesic Reeb flow or  constant Reeb curvature, then the normal vector $N$ is singular}. In fact, for such real hypersurfaces it follows that \eqref{eq: 3.2} becomes $g(A \xi , \xi) g(X, AN)=0$ for any tangent vector field $X$ on $M$. Since $g(A\xi, \xi)=-\cos (2t)$, $0 \leq t \leq \frac{\pi}{4}$, the case of $g(A\xi,\xi)=0$ implies that the normal vector field~$N$ is $\mathfrak A$-isotropic. On the other hand, if $g(A\xi, \xi)\neq 0$, that is, $g(AN, X)=0$ for all $X \in TM$, then the vector field~$AN  \in TQ^{m}$ is given
$$
AN= \sum_{i=1}^{2m}g(AN, e_{i})e_{i} + g(AN, N)N = g(AN,N)N
$$
for any basis $\{e_{1}, e_{2}, \cdots, e_{2m-1}, e_{2m}=N \, |\, e_{i} \in TM, \ i=1,2, \cdots, 2m-1 \}$ for $TQ^{m}$. By virtue of the property of real structure~$A$ given by $A^{2}=I$, it implies
$$
N=A^{2}N=g(AN, N)AN.
$$
Taking the inner product $N$ of this equation, we get $g(AN, N)=\pm 1$. Since $g(AN, N)=\cos (2t)$ where $t \in [0, \frac{\pi}{4})$, we assert that $g(AN, N)=1$. That is, $AN=N$. Hence $N$ should be $\mathfrak A$-principal.}
\end{remark}

\vskip 3pt

On the other hand, from the property of $g(A\xi, N)=0$ we assert that $A\xi$ is a unit tangent vector field on $M$ in $Q^{m}$. Hence by Gauss formula, ${\bar \nabla}_{X}Y  = \nabla_{X}Y + \sigma(X,Y)$, and $\N_{X}\xi = \phi SX $ for $X$, $Y \in TM$, it induces
\begin{equation*}
\begin{split}
\N_{X}(A\x)& = {\bar \N}_{X}(A\x) - \sigma(X, A\x)  \\
& = q(X) JA\x + A(\N_{X}\x) + g(SX, \xi) AN - g(SX, A \x)N \\
& = q(X) JA\x + A\phi SX + g(SX, \xi) AN - g(SX, A \x)N
\end{split}
\end{equation*}
From $AN=AJ\xi=-JA\xi$ and $JA\xi = \phi A\xi + \eta(A\xi) N$, it gives us
\begin{equation}\label{eq: 3.8}
\left \{
\begin{array}{l}
\mathrm{\scriptstyle Tangential\ Part:} \ \  \nabla_{X}(A \xi)  = q(X) \phi A\xi + B \phi SX - g(SX, \xi) \phi A \xi,  \\
 \\

\mathrm{\scriptstyle Normal \ Part:} \ \  q(X) g(A\xi, \xi)  = - g(AN, \phi SX) + g(SX, \xi) g(A\xi, \xi)  + g (SX, A\xi).
\end{array}
\right.
\end{equation}
In particular, if $M$ is Hopf, then the second equation in \eqref{eq: 3.8} becomes
\begin{equation}\label{eq: 3.5}
q(\x) g(A\xi, \xi) = 2\alpha g(A\xi, \xi).
\end{equation}

\vskip 6pt

Now, we want to introduce the following proposition as a typical characterization of real hypersurfaces in $Q^{m}$ with $\mathfrak A$-principal normal vector field due to Berndt and Suh in \cite{BS 2015}.
\begin{pro B}
Let $(\mathcal T_{B})$ be the tube of radius $0 < r < \frac{\pi}{2 \sqrt{2}}$ around the $m$-dimensional sphere~$S^{m}$ in $Q^{m}$. Then the following hold:
\begin{enumerate}[\rm (i)]
\item {$(\mathcal T_{B})$ is a Hopf real hypersurface.}
\item {The normal bundle of $(\mathcal T_{B})$ consists of $\mathfrak A$-principal singular vector fields.}
\item {$(\mathcal T_{B})$ has three distinct constant principal curvatures.
\begin{center}
\begin{tabular}{l|l|l}
\hline
\mbox{\rm principal curvature} & \mbox{\rm eigenspace}  & \mbox{\rm multiplicity}  \\
\hline
$\alpha = -\sqrt{2}\cot(\sqrt{2}r)$ & $T_{\alpha}=\mathrm{Span}\{\xi\}$ & $1$ \\
$\lambda = \sqrt{2} \tan(\sqrt{2}r)$ & $T_{\lambda}=V(A) \cap {\mathcal C}=\{X \in \mathcal C\,|\, AX=X\}$ & $m-1$\\
$\mu = 0$ & $T_{\mu} = JV(A) \cap {\mathcal C}=\{X \in \mathcal C\,|\, AX=-X\}$ & $m-1$ \\
\hline
\end{tabular}
\end{center}}
\item {$S \phi + \phi S = 2 \delta \phi $, $\delta =-\frac{1}{\alpha}\neq 0$ (contact hypersurface). }
\end{enumerate}
\end{pro B}

\vskip 17pt

\section{$\mathcal C$-parallel normal Jacobi operator}\label{section 4}
\setcounter{equation}{0}
\renewcommand{\theequation}{4.\arabic{equation}}
\vspace{0.13in}
In this section we assume that $M$ is a Hopf real hypersurface in the complex quadric~$Q^{m}$, $m \geq 3$, with $\mathcal C$-parallel normal Jacobi operator, that is,
\begin{equation*}
({\nabla}_{X} {\bar R}_{N})Y = 0
\tag{*}
\end{equation*}
for $X \in \mathcal C$ and $Y \in TM$. Here the distribution~$\mathcal C$ is given by
$\mathcal C = \{ X \in TM\,| \ X \bot \xi \}$.

\vskip 6pt

As mentioned in section~5 in \cite{LS 2017}, the normal Jacobi operator~$\NJ \in \mathrm{End}(TM)$ and its covariant derivative are given for any $Y$, $Z \in TM$ by, respectively,
\begin{equation}\label{e: 4.1}
{\bar R}_{N}Z = Z + 3\eta(Z) \xi - g(A \xi, \xi) BZ - g(\phi A \xi, Z) \phi A \xi - g(A \xi, Z) A \xi,
\end{equation}
and
\begin{equation}\label{e: 4.2}
\begin{split}
&(\nabla_{Y}{\bar R}_{N})Z \\
& = 3 g(Z, \phi SY) \xi + 3 \eta(Z) \phi SY - 2g(A \xi, \phi SY) BZ  \\
& \quad \ \  -q(Y)g(A \xi, \xi) \phi BZ -q(Y)g(A \xi, \xi) g(Z, \phi A \xi ) \xi - g(BZ, SY) \phi A \xi  \\
& \quad \ \ - g(\phi A \xi, Z) BSY - g(BZ, \phi SY) A \xi - g(A \xi, Z) B \phi SY \\
& \quad \ \  + g(\phi A \xi, Z) g(SY, \xi) A \xi + g(A \xi, Z) g(SY, \xi) \phi A \xi,
\end{split}
\end{equation}
where we have used:
$$
g(AN, N) = -g(A \xi, \xi),
$$
$$
AN = AJ\xi = -J A \xi = - \phi A \xi - \eta(A \xi)N,
$$
and
\begin{equation*}
\begin{split}
JAZ& =J(BZ+g(AZ, N)N)  \\
&= \phi BZ + \eta(BZ)N - g(AZ, N) \xi \\
&= \phi BZ + g(Z, \phi A\xi) \xi + \eta(BZ)N.
\end{split}
\end{equation*}

\vskip 6pt

Now, under our assumption we first prove that the normal vector field~$N$ of~$M$ in $Q^{m}$ is singular. By Remark~\ref{remark 3.2}, if the Reeb function~$\alpha=g(S\xi, \xi)$ vanishes identically, then $N$ is singular. Thus we will turn out our problem with respect to the case $\alpha \neq 0$ at the remain part as follows.
\begin{lemma}\label{lemma 4.1}
Let $M$ be a Hopf real hypersurface in the complex quadric~$Q^{m}$, $m \geq 3$, with non-vanishing geodesic Reeb flow. If the normal Jacobi operator ${\bar R}_{N}$ is $\mathcal C$-parallel, then the normal vector field~$N$ is singular.
\end{lemma}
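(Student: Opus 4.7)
The plan is to force the function $f:=g(A\xi,\xi)=-\cos(2t)$ into the set $\{0,\pm 1\}$, which by Remark~\ref{remark 3.2} is precisely the singularity of $N$ ($\mathfrak A$-isotropic when $f=0$, $\mathfrak A$-principal when $|f|=1$). Since $t\in[0,\pi/4]$, one has $|f|<1$ exactly when $N$ is non-singular. If $f$ vanishes identically on an open set the conclusion is immediate, so I will work on the open set where $f\neq 0$ and aim to derive $f^{2}=1$.

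\textbf{Step 1} (extracting $S\phi A\xi=0$). From \eqref{e: 4.1} one reads off $\bar R_{N}\xi=4\xi-2fA\xi$. Substituting $Z=\xi$ in \eqref{e: 4.2} and specializing to $Y\in\mathcal C$, the Hopf identity $g(SY,\xi)=0$ together with $\phi\xi=0$, $B\xi=A\xi$ and $g(\phi A\xi,\xi)=0$ eliminates most terms. Taking the inner product with $\xi$ and using $g(B\phi SY,\xi)=g(\phi SY,A\xi)$, everything cancels except $-4f\,g(A\xi,\phi SY)=0$. Since $g(A\xi,\phi SY)=-g(S\phi A\xi,Y)$ and $f\neq 0$, $S\phi A\xi$ is orthogonal to $\mathcal C$; combined with $g(S\phi A\xi,\xi)=\alpha g(\phi A\xi,\xi)=0$, this yields $S\phi A\xi=0$.

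\textbf{Step 2} (an eigen-relation for $A\xi^{\perp}$). Apply the Hopf identity \eqref{eq: 3.1} of Lemma~\ref{lemma Hopf} with $X=\phi A\xi$. Using $S\phi A\xi=0$, $\phi^{2}A\xi=-A\xi+f\xi$, $g(\phi A\xi,A\xi)=0$ and $|\phi A\xi|^{2}=1-f^{2}$, the identity collapses to
\[
SA\xi^{\perp}=-\tfrac{2f^{2}}{\alpha}\,A\xi^{\perp},\qquad A\xi^{\perp}:=A\xi-f\xi\in\mathcal C,
\]
and hence $SA\xi=\alpha f\xi-(2f^{2}/\alpha)A\xi^{\perp}$.

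\textbf{Step 3} (using $\bar R_{N}(\phi A\xi)=0$). A short computation, writing $\phi A\xi=-AN-fN$ and taking the tangent part of $A(\phi A\xi)=-N-fAN$, gives $B(\phi A\xi)=f\phi A\xi$; plugging this into \eqref{e: 4.1} yields $\bar R_{N}(\phi A\xi)=0$. Expanding $(\nabla_{Y}\bar R_{N})(\phi A\xi)=0$ via \eqref{e: 4.2} for $Y\in\mathcal C$, the $\phi A\xi$-coefficients cancel thanks to $S\phi A\xi=0$, and what remains is a $\xi$-part, an $A\xi$-part and a $-(1-f^{2})BSY$ term. Taking the inner product with $\xi$, and using $g(\phi A\xi,\phi SY)=g(Y,SA\xi)=-(2f^{2}/\alpha)g(Y,A\xi)$ together with $g(BSY,\xi)=g(Y,SA\xi)$ from Step~2, I obtain
\[
\tfrac{4f^{2}}{\alpha}\,g(Y,A\xi)+f(1-f^{2})\,q(Y)=0\qquad(Y\in\mathcal C).
\]
The normal component of \eqref{eq: 3.8} for $Y\in\mathcal C$, combined with $g(AN,\phi SY)=0$ (which follows from $A^{2}N=N$, so the tangent part of $AN$ is orthogonal to any tangent vector in the relevant pairing), reduces to $q(Y)=-(2f/\alpha)\,g(Y,A\xi)$. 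Substituting back eliminates $q(Y)$ and produces $\tfrac{2f^{2}}{\alpha}(1+f^{2})\,g(Y,A\xi)=0$ for every $Y\in\mathcal C$. Since $f\neq 0$, this forces $g(Y,A\xi)=0$ on $\mathcal C$; therefore $A\xi^{\perp}=0$, $f^{2}=1$, and $N$ is $\mathfrak A$-principal, completing the proof.

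The main obstacle is the bookkeeping in Step~3: the eleven terms of \eqref{e: 4.2} have to be reduced using $S\phi A\xi=0$, the Hopf vanishing $g(SY,\xi)=0$, $B(\phi A\xi)=f\phi A\xi$ and the formula for $SA\xi$ from Step~2, and what finally decouples $g(Y,A\xi)$ from the $1$-form $q$ is the correct identification of $q|_{\mathcal C}$ from the normal component of the Codazzi equation. Without this last ingredient, one is left with a single relation in the two unknowns $q(Y)$ and $g(Y,A\xi)$.
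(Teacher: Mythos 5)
Your Steps 1 and 2 are correct and essentially reproduce the paper's own computations: the paper also takes $Z=\xi$ in \eqref{e: 4.2}, pairs with $\xi$ to get $g(A\xi,\xi)\,g(S\phi A\xi,X)=0$, concludes $S\phi A\xi=0$ when $f=g(A\xi,\xi)\neq 0$, and then feeds this into \eqref{eq: 3.1} to obtain $\alpha SA\xi=\beta(\alpha^2+2\beta^2)\xi-2\beta^2A\xi$, which is exactly your eigen-relation for $A\xi^{\perp}$. Where you genuinely diverge is Step 3: the paper stays with the $Z=\xi$ equation, applies $\phi$, multiplies by $\alpha$, and takes the inner product with $A\xi$ (using $BA\xi=\xi$) to force $g(A\xi,X)=0$ on $\mathcal C$; you instead evaluate the $\mathcal C$-parallelism at $Z=\phi A\xi$ (correctly noting $B\phi A\xi=f\phi A\xi$ and $\bar R_N\phi A\xi=0$) and pair with $\xi$. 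That alternative is legitimate in principle and your displayed identity $\tfrac{4f^2}{\alpha}g(Y,A\xi)+f(1-f^2)q(Y)=0$ checks out.

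However, there is a genuine error in how you close Step 3. The claim $g(AN,\phi SY)=0$ is false, and the justification offered ($A^2N=N$ implying the tangential part of $AN$ is orthogonal to tangent vectors) does not make sense: the tangential part of $AN$ is $-\phi A\xi$, which is nonzero precisely when $N$ is not $\mathfrak A$-principal, so $g(AN,\phi SY)=-g(\phi A\xi,\phi SY)=-g(SA\xi,Y)=\tfrac{2f^2}{\alpha}g(A\xi,Y)$. This is exactly the quantity whose vanishing you are trying to prove, so assuming it vanishes is circular — indeed, if it were zero, Step 2 would immediately give $g(A\xi,Y)=0$ and the rest of Step 3 would be superfluous. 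The correct reading of the normal part of \eqref{eq: 3.8} on $\mathcal C$ is $q(Y)f=2g(SA\xi,Y)$, i.e.\ $q(Y)=-\tfrac{4f}{\alpha}g(Y,A\xi)$, not $-\tfrac{2f}{\alpha}g(Y,A\xi)$. Fortunately the argument survives the correction: substituting the right value of $q(Y)$ into your Step 3 identity yields $-\tfrac{4f^4}{\alpha}\,g(Y,A\xi)=0$, whose coefficient is still nonzero for $f\neq 0$, so $g(Y,A\xi)=0$ on $\mathcal C$ and the conclusion $f^2=1$ follows as you state. So the proof is repairable with a one-line fix, but as written the key identification of $q|_{\mathcal C}$ is wrong and wrongly justified.
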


\begin{proof}
From \eqref{e: 4.2}, the condition of $\mathcal C$-parallel normal Jacobi operator gives us
\begin{equation}\label{e: 4.3}
\begin{split}
&(\nabla_{X}{\bar R}_{N})\xi =0 \\
& \Longleftrightarrow  \ \ 3 \phi SX - 2 g(A \xi, \phi SX) A \xi - q(X) g(A\xi, \xi) \phi A \xi \\
& \quad \quad \quad - g(A \xi, SX) \phi A \xi - g(A\xi, \phi SX) A \xi - g(A\xi, \xi) B \phi SX=0
\end{split}
\end{equation}
if $Z=\xi$ and $Y=X \in \mathcal C$. On the other hand, from the property of $JA=-AJ$, we obtain that
\begin{equation*}
\phi BZ + g(\phi A\xi, Z) \xi = - B \phi Z + \eta(Z) \phi A \xi, \ \  Z \in TM.
\end{equation*}
By using this formula, \eqref{e: 4.3} can be rewritten as
\begin{equation*}
\begin{split}
& 3 \phi SX - 2 g(A \xi, \phi SX) A \xi - q(X) g(A\xi, \xi) \phi A \xi - g(A \xi, SX) \phi A \xi \\
& \ \  - g(A\xi, \phi SX) A \xi +  g(A\xi, \xi) \phi B SX + g(A \xi, \xi) g(\phi A \xi, SX)\xi =0.
\end{split}
\end{equation*}
Moreover, from \eqref{eq: 3.8} we obtain $q(X)g(A\xi, \xi) = 2 g(S A \xi, X)$, $X \in \C$. So, the above equation becomes
\begin{equation}\label{e: 4.4}
\begin{split}
& 3 \phi SX + 3 g(S \phi A \xi, X) A \xi - 3g(SA\xi, X) \phi A \xi  \\
& \ \   +  g(A\xi, \xi) \phi B SX + g(A \xi, \xi) g(S \phi A \xi, X)\xi =0
\end{split}
\end{equation}
Taking the inner product of \eqref{e: 4.4} with $\xi$, we get
$$
g(A\xi, \xi) g(S \phi A \xi, X) =0
$$
for all $X \in \C$.

\vskip 3pt

When $g(A \xi, \xi)=0$, the normal vector field~$N$ should be $\mathfrak A$-isotropic. Hence from now on we consider $g(A \xi, \xi) \neq 0$. It implies that $g(S \phi A \xi, X) =0$, which leads to $S \phi A \xi = g(S \phi A \xi, \xi) \xi =0$. From this, \eqref{e: 4.4} becomes
\begin{equation}\label{e: 4.5}
3 \phi SX - 3g(SA\xi, X) \phi A \xi  +  g(A\xi, \xi) \phi B SX =0,
\end{equation}
which yields
\begin{equation*}
\begin{split}
& - 3 SX + 3g(SA\xi, X) A \xi - 3 g(SA\xi, X) g(A\xi, \xi) \xi \\
& \quad  - g(A\xi, \xi) B SX + g(A \xi, \xi) g( A SX, \xi) \xi =0,
\end{split}
\end{equation*}
if we apply the structure tensor~$\phi$ to \eqref{e: 4.5} and use $\phi^{2}Z= -Z + \eta(Z) \xi$, $A\xi = B \xi$. Since $\alpha=g(S \xi, \xi) \neq 0$, we consequently have
\begin{equation}\label{e: 4.6}
\begin{split}
& - 3 \alpha SX + 3 \alpha g(SA\xi, X) A \xi - 3 \alpha g(SA\xi, X) g(A\xi, \xi) \xi \\
& \quad  - \alpha g(A\xi, \xi) B SX + \alpha g(A \xi, \xi) g(X, SA\xi) \xi =0.
\end{split}
\end{equation}

\vskip 3pt

By the way, from \eqref{eq: 3.1} and $S\phi A \xi =0$, we have $
\alpha SA\xi = \beta (\alpha^{2}+ 2 \beta^{2}) \xi - 2 \beta^{2} A \xi$, where $\beta = g(A \xi, \xi)$. Thus it follows, for any $X\in \C$
\begin{equation}\label{e: 4.7}
\alpha g(S A \xi, X) = - 2 \beta^{2}g(A\xi, X).
\end{equation}
So, the equation \eqref{e: 4.6} yields
\begin{equation*}
- 3 \alpha SX - 6 \beta^{2} g(A\xi, X) A \xi + 6 \beta^{3} g(A\xi, X) \xi - \alpha \beta B SX - 2 \beta^{3} g(A \xi, X) \xi =0.
\end{equation*}
Taking the inner product with $A\xi$ of this equation, we get
\begin{equation*}
- 3 \alpha g(SX, A\xi) - 6 \beta^{2} g(A\xi, X)  + 4 \beta^{4} g(A\xi, X) - \alpha \beta g(BSX, A\xi) =0.
\end{equation*}
Since $BA\xi = A^{2}\xi  = \xi$, together with \eqref{e: 4.7} and $\beta \neq 0$, it gives us $g(A\xi, X) =0$ for all $X \in \C$. Hence
\begin{equation}\label{e: 4.8}
A\xi = g(A\xi, \xi) \xi = \beta \xi
\end{equation}
follows.
Applying the real structure~$A$ to this equation, it follows $\xi = A^{2}\xi = \beta A\xi$. By using \eqref{e: 4.8} again, it leads to
$$
\beta^{2}=1.
$$
As stated in Section~\ref{section 3}, we see that $\beta=g(A\xi, \xi)= -\cos (2t)$ where $t \in [0, \frac{\pi}{4})$, since $\beta \neq 0$. Hence $\beta^{2}= \cos^{2}(2t)=1$ implies $t=0$, which means that the normal vector field~$N$ is $\mathfrak A$-principal. Actually, if $t=0$, then $N$ can be expressed as $N= V_{1}$ for some $V_{1} \in V(A)$.

\vskip 3pt

Summing up above discussions, we can give a complete proof of our lemma. 
\end{proof}

\vskip 6pt

By virtue of Remark~\ref{remark 3.2} and Lemma~\ref{lemma 4.1}, if we consider a Hopf real hypersurface $M$ with $\mathcal C$-parallel normal Jacobi operator in $Q^{m}$, $m \geq 3$, then the normal vector field $N$ should be singular, that is, either $\mathfrak A$-principal or $\mathfrak A$-isotropic. Hence let us consider the case of $M$ has $\mathfrak A$-principal normal vector field~$N$ in $Q^{m}$. Then \eqref{e: 4.4} becomes $3 \phi SX - \phi BSX =0$ for all $X \in \C$. Applying the structure tensor~$\phi$ to this equation, we get
\begin{equation}\label{e: 4.9}
- 3 SX + BSX =0
\end{equation}
for $X \in \mathcal C$.

\vskip 3pt

On the other hand, when a Hopf real hypersurface~$M$ in $Q^{m}$ has $\mathfrak A$-principal normal vector field~$N$, it follows that $A \xi = - \xi$ and $AN = N$. In particular, from $AN=N$, we obtain:
\begin{equation*}
BSZ = ASZ - g(ASZ, N)N = ASZ = SZ - 2 \alpha \eta(Z) \xi,
\end{equation*}
where we have used the Weingarten formula, ${\bar \nabla}_{Z}N = -SZ$ for any $Z \in TM$, in the third equality. Thus  \eqref{e: 4.9} gives us $SX=0$ for all $X \in \mathcal C$. That is, the diagonal components of the shape operator~$S$ of $M$ is given
\begin{equation*}
S = \mathrm{diag}(\alpha, \underbrace{0, 0, \cdots, 0}_{2m-1}),
\end{equation*}
which implies the shape operator~$S$ of $M$ should be anti-commuting, $S \phi + \phi S =0$. By virtue of the main theorem in \cite{LS 2018}, we can assert that {\it there does not exists any Hopf real hypersurface with $\mathcal C$-parallel normal Jacobi operator and with $\mathfrak A$-principal normal tangent vector field~$N$ in $Q^{m}$, $m \geq 3$}.

\vskip 6pt

Next, let us assume that $N$ is $\mathfrak A$-isotropic. It implies that $SA\xi = SAN = - S \phi A \xi =0$. So, \eqref{e: 4.4} becomes $\phi SX =0$ for any $X \in \C$. Then applying the structure tensor~$\phi$ to this equation, it gives $S X=0$ for any $X \in \C$. It yields that $S$ satisfies the anti-commuting property, $S \phi + \phi S =0$. So, we obtain that {\it there does not exists any Hopf real hypersurface with $\mathcal C$-parallel normal Jacobi operator and with $\mathfrak A$-isotropic normal tangent vector field~$N$ in $Q^{m}$, $m \geq 3$}, and this completes the proof of our Theorem~\ref{Main Theorem 1}.

\vskip 17pt

\section{Reeb parallel normal Jacobi operator}\label{section 5}
\setcounter{equation}{0}
\renewcommand{\theequation}{5.\arabic{equation}}
\vspace{0.13in}

In this section we assume that $M$ is a Hopf real hypersurface in the complex quadric~$Q^{m}$, $m \geq 3$, with Reeb parallel normal Jacobi operator, that is,
\begin{equation*}
({\nabla}_{\xi} {\bar R}_{N})Y = 0
\tag{**}
\end{equation*}
for all tangent vector fields $Y$ of $M$. Then, the equation~\eqref{e: 4.2} together with \eqref{eq: 3.5} yield
\begin{equation}\label{eq: 4.3}
\begin{split}
& (\nabla_{\xi}{\bar R}_{N})Y = 0  \\
&\ \  \Longleftrightarrow \ \ q(\xi) g(A \xi, \xi) \big \{ \phi BY + g(\phi A \xi, Y) \xi   \big \} =0 \\
& \ \ \Longleftrightarrow \ \ 2 \alpha g(A \xi, \xi) \big \{ \phi BY + g(\phi A \xi, Y) \xi  \big \} =0.
\end{split}
\end{equation}

By virtue of Remark~\ref{remark 3.2}, if the Reeb function~$\alpha=g(S \xi, \xi)$ vanishes identically, then the normal vector field~$N$ of $M$ in $Q^{m}$ should be singular. Hence, now, let us consider the case of $\alpha \neq 0$. From \eqref{eq: 4.3}, we can divide the study into the following two cases.
\vskip 6pt
{\bf Case 1.}\ \  $g(A\xi, \xi)=0$

\vskip 2pt

From the definition of $\mathfrak A$-isotropic singular vector field, the normal vector field~$N$ should be $\mathfrak A$-isotropic.

\vskip 6pt

{\bf Case 2.}\ \  $g(A\xi, \xi) \neq 0$

\vskip 2pt

It implies that $\phi BY + g(\phi A \xi, Y) \xi =0$ for any tangent vector field~$Y \in TM$. Applying the structure tensor~$\phi$, we have
\begin{equation*}
BY = \eta(BY)\xi = g(A\xi, Y) \xi.
\end{equation*}
Since $BY = AY - g(AY, N)N$, it follows
\begin{equation}\label{eq: 4.4}
AY - g(AY, N)N =  g(A\xi, Y) \xi.
\end{equation}
Applying the real structure~$A$ to \eqref{eq: 4.4}, we obtain
\begin{equation}
Y - g(AY, N) AN = g(A\xi, Y) A \xi.
\end{equation}
Since $AN = -\phi A \xi - g(A\xi, \xi) N$, it leads to
\begin{equation*}
Y + g(AY, N) \phi A \xi  + g(AY, N) g(A \xi, \xi) N  = g(A\xi, Y) A \xi.
\end{equation*}
From this, we get
\begin{equation*}
\left \{
\begin{array}{l}
\mathrm{Tangential\ Part:} \ \  Y + g(AY, N) \phi A \xi  = g(A\xi, Y) A \xi   \\
 \\
\mathrm{Normal \ Part:} \ \  g(AY, N) g(A \xi, \xi) =0
\end{array}
\right.
\end{equation*}
Since $g(A\xi, \xi) \neq 0$, the normal part gives us $g(AN, Y) =0$ for all $Y \in TM$. So, it implies that the vector field $AN \in TQ^{m}$ can be expressed by
$$
AN = g(AN, N)N.
$$
It follows thats $N = g(AN, N)AN = \big(g(AN,N) \big)^{2}N$, which implies $g(AN, N) = \pm 1$. On the other hand, as mentioned in section~\ref{section 3}, $g(AN,N)= - \cos(2t)$, $t \in [0, \frac{\pi}{4})$. So, we get $g(AN, N)=1$, when $t=0$. It implies that $N$ should be $\mathfrak A$-principal.

\vskip 6pt

Summing up these observations, we can assert that:
\begin{lemma}\label{lemma 5.1}
Let $M$ be a Hopf real hypersurface in the complex quadric~$Q^{m}$, $m \geq 3$, with Reeb parallel normal Jacobi operator. Then $M$ has singular normal vector field, that is, $N$ is either $\mathfrak A$-principal or $\mathfrak A$-isotropic.
\end{lemma}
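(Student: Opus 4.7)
My plan is to specialize the covariant-derivative formula \eqref{e: 4.2} for the normal Jacobi operator to the case $Y=\xi$, and exploit the Hopf condition together with the Codazzi identity \eqref{eq: 3.5}. Substituting $S\xi=\alpha\xi$ into \eqref{e: 4.2} causes many terms to collapse, since $\eta(SY)=\alpha$, $\phi S\xi=0$, and $BSY=\alpha B\xi=\alpha A\xi$. What survives should be, up to the $q(\xi)$-term, a multiple of $\phi BY+g(\phi A\xi,Y)\xi$, with coefficient $q(\xi)g(A\xi,\xi)$. Replacing $q(\xi)g(A\xi,\xi)$ by $2\alpha\, g(A\xi,\xi)$ via \eqref{eq: 3.5} then yields
\[
2\alpha\, g(A\xi,\xi)\bigl\{\phi BY+g(\phi A\xi,Y)\xi\bigr\}=0
\]
for every $Y\in TM$, which is the key identity.

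The conclusion is then a clean case analysis. If $\alpha\equiv 0$ (vanishing geodesic Reeb flow), then Remark~\ref{remark 3.2} already forces $N$ to be singular, and we are done. If $\alpha\neq 0$, I split on $g(A\xi,\xi)$. When $g(A\xi,\xi)=0$, the definition of $\mathfrak A$-isotropic (equivalently $\beta=-\cos(2t)=0$, i.e.\ $t=\pi/4$) shows at once that $N$ is $\mathfrak A$-isotropic.

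The remaining case $g(A\xi,\xi)\neq 0$ is the only nontrivial piece. Here I would extract $\phi BY+g(\phi A\xi,Y)\xi=0$ for all $Y$, apply $\phi$ and use $\phi^2=-I+\eta\otimes\xi$ together with $g(\phi A\xi,Y)=g(AY,N)$ to obtain $BY=g(A\xi,Y)\xi$. Rewriting this via $BY=AY-g(AY,N)N$ and then applying the real structure $A$, the normal component yields $g(AN,N)\,g(A\xi,\xi)=0$, so $g(AN,N)=0$; but together with $AN=g(AN,N)N$ (read off from the tangential/normal split) and $A^2=I$, one finds $N=g(AN,N)^2 N$, forcing $g(AN,N)=\pm1$. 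Since $g(AN,N)=-\cos(2t)$ with $t\in[0,\pi/4]$ gives a value in $[-1,0]$, only $g(AN,N)=-1$ is admissible (equivalently $t=0$), so $AN=N$ and $N$ is $\mathfrak A$-principal.

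The main obstacle is the last case: the argument must reconcile two constraints on $g(AN,N)$ that look contradictory at first glance, and care is needed to use $A^2=I$ on the ambient $Q^m$ rather than on $M$, since $B=A$ only up to a normal component. Once this bookkeeping is straight, the tangential and normal parts of the $A$-image of $BY=g(A\xi,Y)\xi$ deliver the result cleanly and the lemma follows.
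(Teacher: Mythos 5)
Your strategy is exactly the paper's: differentiate $\bar R_N$ in the direction $\xi$ using \eqref{e: 4.2}, invoke \eqref{eq: 3.5} to reduce the condition to $2\alpha\, g(A\xi,\xi)\{\phi BY+g(\phi A\xi,Y)\xi\}=0$, dispose of $\alpha\equiv 0$ via Remark~\ref{remark 3.2}, and split on $g(A\xi,\xi)$. The key identity and the first two cases are correct and coincide with the paper's argument.

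The case $g(A\xi,\xi)\neq 0$, however, does not close as you have written it. The identity $BY=g(A\xi,Y)\xi$, and hence $Y-g(AY,N)AN=g(A\xi,Y)A\xi$, holds only for \emph{tangent} $Y$, so its normal component reads $g(AY,N)\,g(A\xi,\xi)=0$ for $Y\in TM$; you are not entitled to substitute $Y=N$. The correct deduction is therefore $g(AY,N)=0$ for every tangent $Y$, i.e.\ $AN\perp TM$ and so $AN=g(AN,N)N$ --- not ``$g(AN,N)=0$''. The two ``contradictory constraints'' you propose to reconcile are an artifact of this illegitimate substitution; once it is removed there is nothing to reconcile, and $A^2=I$ gives $g(AN,N)=\pm 1$ directly. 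Your sign bookkeeping at the end is also inconsistent: from \eqref{AN, Axi} one has $g(AN,N)=\cos(2t)\in[0,1]$ (it is $g(A\xi,\xi)$ that equals $-\cos(2t)$), so $g(AN,N)=\pm1$ forces $g(AN,N)=+1$, $t=0$ and $AN=N$. Your chain ``$g(AN,N)=-\cos(2t)\in[-1,0]$, hence $g(AN,N)=-1$, so $AN=N$'' cannot stand, since $AN=g(AN,N)N$ would then give $AN=-N$. The conclusion that $N$ is $\mathfrak A$-principal in this case is the one the paper reaches, but both of these steps must be repaired before the lemma follows.
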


\vskip 3pt

Now, we assume that $M$ has an $\mathfrak A$-principal normal vector field in $Q^{m}$. By virtue of Theorem~$\rm B$ given in Introduction, we see that $M$ is locally congruent to a model space of $(\mathcal T_{B})$. Here, the model space of $(\mathcal T_{B})$ is a tube around the $S^{m}$ in $Q^{m}$ with radius~$r \in (0, \frac{\pi}{ 2 \sqrt{2}})$.

\vskip 3pt

But the real hypersurface $(\mathcal T_{B})$ in $\Q$ does not satisfy the property of Reeb parallel normal Jacobi operator. To show this, let us assume that the normal Jacobi operator ${\bar R}_{N}$ of $(\mathcal T_{B})$ is Reeb parallel. It implies
\begin{equation}\label{eq: 4.6}
\alpha \phi BY =0
\end{equation} from \eqref{eq: 4.3}.

\vskip 3pt

On the other hand, if $N$ is $\mathfrak A$-principal, we obtain that $AY \in TM$ for any $Y \in TM$. So, \eqref{eq: 4.6} becomes $\alpha \phi AY =0$ for all $Y \in TM$. On $(\mathcal T_{B})$, the principal curvature $\alpha = -\sqrt{2} \cot (\sqrt{2}r)$ is a non-zero constant function for $r \in (0, \frac{\pi}{ 2 \sqrt{2}})$. So, we consequently have $\phi AY =0$, which implies $AY=\eta(AY) \xi = -\eta(Y)\xi$, together with $A\xi=-\xi$. From the property of $A^{2}=I$, it gives us
$$
Y= -\eta(Y) A\xi =  \eta(Y) \xi
$$
for all $Y \in T (\mathcal T_{B})$, where $T (\mathcal T_{B})$ denotes the tangent space of type $(B)$. Then it yields that $\mathrm{dim}T (\mathcal T_{B}) =1$, which gives us a contradiction. In fact, according to the Proposition~$\rm B$, we see that the dimension of $T (\mathcal T_{B})$ is $2m-1$, that is, $\mathrm{dim} T (\mathcal T_{B}) = 2m-1$. Therefore, it gives $m=1$, which makes a contradiction for $m \geq 3$. Hence it is shown that, {\it if $M$ has an $\mathfrak A$-principal normal vector field, then it does not have Reeb parallel normal Jacobi operator}.

\vskip 6pt

This together with Lemma~\ref{lemma 5.1} that, {\it if $M$ has Reeb parallel normal Jacobi operator, then it has an $\mathfrak A$-isotropic normal vector field}.
%
%

\vskip 17pt

\begin{ackn}
{\rm The present authors would like to express their sincere gratitude to the referee for his/her valuable comments throughout the manuscript. By virtue of his/her efforts we have made a nice version better than the first manuscript.}
\end{ackn}
\vskip 17pt


\end{document}